	\theoremstyle{plain}
	\newtheorem{prop}{Proposition}[section]
	\newtheorem{thm}[prop]{Theorem}
	\newtheorem{cor}[prop]{Corollary}    
	\newtheorem{lem}[prop]{Lemma}
	\theoremstyle{definition}
	\numberwithin{equation}{section}
\newcommand{\dqbin}[2]{\displaystyle\genfrac{[}{]}{0pt}{}{#1}{#2}}
\newcommand{\N}{\mathbb{N}}
\newcommand{\Z}{\mathbb{Z}}
\newcommand{\F}{\mathbb{F}}
\newcommand{\Q}{\mathbb{Q}}
\newcommand{\LL}{\Lambda}
\newcommand{\mac}[1]{\widetilde{H}_{#1}}
\newcommand{\kostka}{\widetilde{K}}
\begin{document}

\title{Some Plethystic Identites and Kostka-Foulkes polynomials.}

\author{Mahir Bilen Can}

\date{}

\maketitle

\section{\textbf{Introduction.}}

	Symmetric functions  $\{E_{n,k}(X)\}_{k=1}^n$, defined  by the Newton interpolation
	\begin{equation*}
	e_n[X\frac{1-z}{1-q}]=\sum_{k=1}^n (z;q)_k \frac{E_{n,k}(X)}{(q;q)_k}
	\end{equation*}
	plays an important role in the Garsia-Haglund proof of the $q,t$-Catalan conjecture, \cite{GaHag02}.

	Let $\Lambda^n_{\Q(q,t)}$ be the space of symmetric functions of degree $n$, 
	over the field of rational functions $\Q(q,t)$, and let 
	$\nabla: \Lambda^n_{\Q(q,t)} \rightarrow \Lambda^n_{\Q(q,t)}$  be the Garsia-Bergeron operator.

	By studying recursions, Garsia and Haglund show that the coefficient of the elementary 
	symmetric function $e_n(X)$ in the image $\nabla( E_{n,k}(X))$ of $E_{n,k}(X)$ 
	is equal to the following combinatorial summation
	\begin{equation}\label{E:keyequation}
	\langle \nabla (E_{n,k}(X)), e_n(X)\rangle = \sum_{\pi \in D_{n,k}} q^{area(\pi)}t^{bounce(\pi)},
	\end{equation}
	where $D_{n,k}$ is the set of all Dyck paths with initial $k$ North steps followed by an East step. 
	Here $area(\pi)$ and $bounce(\pi)$ are two numbers associated with a Dyck path $\pi$.  
	It is conjectured in \cite{Hag07}, more generally, that the $\nabla E_{n,k}(X) $ are ``Schur positive.''

 	In \cite{CanLoehr06}, using (\ref{E:keyequation}), Can and Loehr  prove the $q,t$-Square 
	conjecture of the Loehr and Warrington \cite{LoehrWarrington}.

	The aim of this article is to understand the functions $\{E_{n,k}(X)\}_{k=1}^n$ better.  
	We prove that the vector subspace generated by the set $\{E_{n,k}(X)\}_{k=1}^n$  of the space 
	$\Lambda^n_{\Q(q)}$  of degree $n$ symmetric functions over the field $\Q(q)$, 
	is equal to the subspace generated by 
	$$
	\{ s_{(k,1^{n-k})}[X/(1-q)]\}_{k=1}^n,
	$$ 
	Schur functions of hook shape, plethystically evaluated at $X/(1-q)$.

	In particular, we determine explicitly the transition matrix and its inverse from 
	$\{E_{n,k}(X)\}_{k=1}^n$ to $\{ s_{(k,1^{n-k})}[X/(1-q)]\}_{k=1}^n $. 
	The entries of the matrix turns out to be cocharge Kostka-Foulkes polynomials.

	We find the expansion of $E_{n,k}(X)$ into the Hall-Littlewood basis, and as a corollary 
	we recover a closed formula for the  cocharge Kostka-Foulkes polynomials 
	$\widetilde{K}_{\lambda,\mu}(q)$ when $\lambda$ is a hook shape;
	\begin{equation*}
	 \widetilde{K}_{(n-k,1^{k}) \mu}(q)=(-1)^{k} \sum_{i=0}^{k} (-1)^i q^{i\choose 2} \dqbin{r}{i}.
	\end{equation*}
	Here, $\mu$ is a partition of $n$ whose first column is of height $r$.

	\section{\textbf{Background.}}                        
 
	\subsubsection{Notation} A partition $\mu$ of $n\in \Z_{>0}$, denoted $\mu \vdash n$, 
	is a nonincreasing sequence $\mu_1\geq \mu_2\geq \ldots \geq \mu_k >0$ of numbers such that 
	$\sum \mu_i = n$. The conjugate partition $\mu'=\mu_1'\geq \ldots \geq \mu_s'>0$ is defined by setting 
	$\mu_i'= |\{\mu_r:\ \mu_r \geq i \}|.$

	 $Par(n,r)$ denotes the set of all partitions $\mu \vdash n$ whose biggest part is equal to $\mu_1=r$. 

	We identify a partition $\mu$ with its Ferrers diagram, in French notation. Thus, 
	if the  parts of $\mu$ are $\mu_1\geq \mu_2\geq \cdots\geq \mu_k>0$,  then 
	the corresponding Ferrers diagram have  $\mu_i$ lattice cells in the $i^{th}$ row
	(counting from bottom to up).

	Following Macdonald, \cite{Mac95} the arm, leg, coarm and coleg
	of a lattice square $s$ are the parameters $a_\mu(s),l_\mu(s),a_\mu'(s)$ and $l_\mu'(s)$ 
	giving the number of cells of $\mu$ that are respectively strictly  EAST, NORTH, WEST and 
	SOUTH of $s$ in $\mu$.

	Given a partition $\mu=(\mu_1,\mu_2,\ldots ,\mu_k)$, we set
	\begin{equation}\label{E:legsum}
	n(\mu)=\sum_{i=1}^k (i-1)\mu_i= \sum_{s\in \mu}\, l_\mu(s) .
	\end{equation}
	We also set 
	\begin{equation}\label{E:denom}
	\widetilde{h}_{\mu}(q,t)=\prod_{s\in \mu} (q^{a_{\mu}(s)}-t^{l_{\mu}(s)+1}\ )\hspace{.2in}  \text{and}\hspace{.2in}
	\widetilde{h'}_{\mu}(q,t)=\prod_{s\in \mu} (t^{l_{\mu}(s)}-q^{a_{\mu}(s)+1}).
	\end{equation}

	Let $\F$ be a field, and let $X=\{x_1,x_2,...\}$ be an alphabet (a set of  indeterminates). 
	The algebra of symmetric functions over $\F$ with the variable set $X$  is denoted by $\LL_{\F}(X)$. 

	If $\Q\subseteq \F$, it is well known that $\LL_{\F}(X)$ is freely generated by the set of power-sum 
	symmetric functions 
	 \begin{equation*}
	 \{ p_r(X):\ r=1,2,...\  \text{and},\ p_r(X)=x_1^r+x_2^r+\cdots \}.
	 \end{equation*}

	The algebra, $\LL_{\F}(X)$ has a natural grading (by degree).
	\begin{equation*}
	\LL_{\F}(X)=\bigoplus_{n\geq 0} \LL_{\F}^n(X),
	\end{equation*}
	where $\LL_{\F}^n(X)$ is the space of homogenous symmetric functions of degree $n$.

	A basis for the vector space $\LL_{\F}^n(X)$ is give by the set 
	$\{p_{\mu}\}_{\mu \vdash n}$, 
	\begin{equation}
	p_{\mu}(X) = \prod_{i=1}^k p_{\mu_i}(X),\  \text{where}\ \mu=\sum_{i=1}^k \mu_i.
	\end{equation}

	Another basis for  $\LL_{\F}^n(X)$  is given by the Schur functions $\{s_{\mu}(X)\}_{\mu \vdash n}$, 
	where $s_{\mu}(X)$ is defined as follows. Let
	\begin{equation}
	e_n(X)=\sum_{1\leq i_1 < \cdots < i_n} x_{i_1}x_{i_2}\cdots x_{i_n}
	\end{equation}
	be the $n$'th elementary symmetric function. If $\mu=\sum_{i=1}^k \mu_i$, then
	\begin{equation}
	s_{\mu}(X) =\det (e_{\mu_i'-i+j}(X))_{1\leq i,j \leq m},
	\end{equation}
	where  $\mu_i'$ is the $i$'th part of the conjugate partition $\mu'=(\mu_1',...,\mu_{l}')$ and $m\geq l$.

	\subsubsection{Plethysm}

	For the purposes of this section, we represent an alphabet $X=\{x_1,x_2,...\}$ 
	as a formal sum $X=\sum x_i$. Thus, if  $Y=\sum y_i$ is another alphabet, then
	\begin{equation}
	XY= (\sum x_i)(\sum y_i)  =\sum_{i,j} x_iy_j= \{x_iy_j\}_{i,j\geq 1},
	\end{equation} 
	and 
	\begin{equation}
	X+Y=(\sum_i x_i)+ (\sum_j y_j)= \{x_i,y_j\}_{i,j \geq 1}.
	\end{equation} 

	The formal additive inverse, denoted $-X$, of an alphabet $X=\sum x_i$ is defined so that $-X+X=0$.

	In this vein, if $p_k(X)=\sum_{k\geq 1} x_i^k$ is a power sum symmetric function, we define
	\begin{eqnarray}\label{E:powerplethysm}
	p_k[XY] &=& p_k[X]p_k[Y]\\
	p_k[X+Y] &=& p_k[X] + p_k[Y]\\
	p_k[-X] &=& -p_k[X].
	\end{eqnarray}

	This operation is called \textit{plethysm.}  Since $\LL_{\F}$ is freely generated by the power sums, 
	the plethysm operator can be extended to the other symmetric functions.  In fact, using plethysm, 
	one defines the following bases for $\LL_{\Q(q)}^n$ and $\LL_{\Q(q,t)}^n$, respectively.

	\textbf{Theorem-Definition 1.} (\textit{cocharge Hall-Littlewood polynomials})\\
	There exists a basis $\{\mac{\mu}(X;q)\}_{\mu \vdash n}$ for the vector space 
	$\LL_{\Q(q)}^n$, which  is uniquely characterized by the properties
	\begin{enumerate}
	\item $\mac{\mu}(X;q)\in \Z[q]\{s_{\lambda}: \lambda \geq \mu\}$,
	\item $\mac{\mu}[(1-q)X;q]\in \Z[q]\{s_{\lambda}: \lambda \geq \mu'\}$,
	\item $\langle \mac{\mu}(X;q),s_{(n)} \rangle=1.$
	\end{enumerate}

	\textbf{Theorem-Definition 2.} (\textit{Modified Macdonald polynomials})\\
	There exists a basis $\{\mac{\mu}(X;q,t)\}_{\mu \vdash n}$ for the vector space $\LL_{\Q(q,t)}^n$, 
	which is  uniquely characterized by the properties
	\begin{enumerate}
	\item $\mac{\mu}(X;q,t)\in \Z[q]\{s_{\lambda}: \lambda \geq \mu\}$,
	\item $\mac{\mu}[(1-q)X;q,t]\in \Z[q]\{s_{\lambda}: \lambda \geq \mu'\}$,
	\item $\langle \mac{\mu}[X(1-t);q,t], s_{(n)} \rangle=1.$
	\end{enumerate}

	It follows from these Theorem-Definitions that 
	\begin{eqnarray}
	\label{E:specialization1}
	\mac{\mu}(X;0,t) &=&\mac{\mu}(X;t),\\
	\label{E:specialization2}
	\mac{\mu}(X;q,t) &=& \mac{\mu'}(X;t,q).
	\end{eqnarray}

	\subsubsection{Kostka-Foulkes and Kostka-Macdonald polynomials}

	Let $\mac{\mu}(X;q)=\sum_{\lambda} \kostka_{\lambda \mu}(q) s_{\lambda}$, and
	$\mac{\mu}(X;q,t)=\sum_{\lambda} \kostka_{\lambda \mu}(q,t) s_{\lambda}$ be, respectively, 
	the Schur basis expansions of the Hall-Littlewood and Macdonald symmetric functions. 
	The coefficients of the Schur functions are called, respectively, the \textit{cocharge Kostka-Foulkes polynomials}, 
	and the \textit{modified Kostka-Macdonald polynomials}. 
	It is known that $ \kostka_{\lambda \mu}(q,t), \kostka_{\lambda \mu}(q) \in \N[q,t]$.

	It follows from equation (\ref{E:specialization1}) and the Schur basis expansions that 
	\begin{equation}\label{E:kostkaspecialization}
	\kostka_{\lambda \mu}(0,t) = \kostka_{\lambda \mu}(t).
	\end{equation}

	\subsubsection{Cauchy Identities}

	Let $X=\sum x_i$ be an alphabet, and let 
	$\Omega[X] = \exp(\sum_{k=1}^{\infty} p_k(X)/k).$
	Then, 
	\begin{eqnarray}\label{E:OMEGA}
	\Omega[X] &=& \prod_i \frac{1}{1-x_i} = \sum_{n=0}^{\infty} s_n(X),\\
	\Omega[X] &=& \prod_i (1-x_i)= \sum_{n=0}^{\infty} s_{1^n}(X).
	\end{eqnarray}

	If $Y=\sum y_i$ is another alphabet, then 
	\begin{equation} \label{E:Cauchy1}
	e_n[XY]=\sum_{\mu \vdash n}s_{\mu}[X]s_{\mu'}[Y],
	\end{equation}

	\begin{equation}\label{E:Cauchy2}
	e_n[XY]= \sum_{\mu \vdash n} \frac{\mac{\mu}[X;q,t] \mac{\mu}[Y;q,t]}{\widetilde{h}_{\mu}(q,t) \widetilde{h'}_{\mu}(q,t)},
	\end{equation}
	where $\widetilde{h}_{\mu}(q,t)$ and $\widetilde{h'}_{\mu}(q,t)$ are as in (\ref{E:denom}).

	\begin{equation}\label{E:Cauchy3}
	s_{\mu}[1-z]=\left\{\begin{array}{cc}
	(-z)^k(1-z) & \mbox{if }\mu=(n-k,1^k),\\
	0 & \mbox{otherwise.} 
	\end{array}\right.
	\end{equation}

	\subsubsection{Cauchy's $q$-binomial theorem}

	Let $(z;q)_k  = (1-z)(1-qz)\cdots(1-q^{k-1}z)$, and let 
	\begin{equation*}
	\dqbin{k}{r} = \frac{(q;q)_k}{(q;q)_r(q;q)_{k-r}}.
	\end{equation*} 

	Then, the Cauchy $q$-binomial theorem states that 
	\begin{equation}\label{E:cauchyqbinom}
	(z;q)_k = \sum_{r=0}^k z^r  (-1)^r e_r[1,q,...,q^{k-1}] = 
	\sum_{r=0}^k z^r q^{ {r\choose 2} } (-1)^r \dqbin{k}{r}.
	\end{equation}

	\section{\textbf{Symmetric functions} $E_{n,k}(X)$.}

	The family $\{E_{n,k}(X)\}_{k=1}^n$ of symmetric functions are defined by the plethystic identity 
	\begin{equation}\label{E:basicexp}
	e_n[X\frac{1-z}{1-q}]=\sum_{k=1}^n \frac{(z;q)_k}{(q;q)_k}E_{n,k}(X).
	\end{equation}

	Let $0\leq k\leq r $, and let 
	\begin{equation}
	T_{k+1,r}=(-1)^{k} \sum_{i=0}^k (-1)^i q^{i\choose 2}
	\dqbin{r}{i}.
	\end{equation}

	\begin{prop}\label{P:SchurtoE}
	For $k=0,...,n-1$,
	\begin{equation}
	s_{(k+1,1^{n-k-1})}[X/(1-q)]  =
	 \sum_{r=k+1}^n  T_{k+1,r} \frac{E_{n,r}(X)}{(q;q)_r}.
	\end{equation}
	\end{prop}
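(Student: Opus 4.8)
The plan is to obtain two different expansions of $e_n\big[X(1-z)/(1-q)\big]$ and match them. On one hand, by the defining relation \eqref{E:basicexp} this equals $\sum_{r=1}^n \frac{(z;q)_r}{(q;q)_r}E_{n,r}(X)$. On the other hand, writing the argument as a product of alphabets, $X\frac{1-z}{1-q}=\big(\frac{X}{1-q}\big)(1-z)$, and applying the Cauchy identity \eqref{E:Cauchy1} with $\frac{X}{1-q}$ in place of $X$ and $1-z$ in place of $Y$, we get $\sum_{\mu\vdash n}s_{\mu}[X/(1-q)]\,s_{\mu'}[1-z]$. By \eqref{E:Cauchy3} the factor $s_{\mu'}[1-z]$ vanishes unless $\mu'=(n-k,1^{k})$ for some $k$, i.e.\ unless $\mu=(k+1,1^{n-k-1})$ is a hook, in which case $s_{\mu'}[1-z]=(-z)^{k}(1-z)$. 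As $k$ runs over $0,\dots,n-1$ this hits each of the $n$ hook shapes of size $n$ exactly once, so
\begin{equation*}
\sum_{r=1}^n \frac{(z;q)_r}{(q;q)_r}E_{n,r}(X)=\sum_{k=0}^{n-1}(-z)^{k}(1-z)\,s_{(k+1,1^{n-k-1})}[X/(1-q)],
\end{equation*}
an identity in $\Lambda^n_{\Q(q)}[z]$.

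The heart of the matter is the purely combinatorial identity
\begin{equation*}
(z;q)_r=\sum_{k=0}^{r-1}T_{k+1,r}\,(-z)^{k}(1-z),\qquad r\ge 1,
\end{equation*}
which re-expands the Newton factor $(z;q)_r$ in the "difference" family $\{(-z)^{k}(1-z)\}$. To prove it I would expand $(z;q)_r=\sum_{i=0}^r a_i z^i$ by Cauchy's $q$-binomial theorem \eqref{E:cauchyqbinom}, so that $a_i=(-1)^i q^{\binom{i}{2}}\dqbin{r}{i}$, and observe that then $T_{k+1,r}=(-1)^k(a_0+\cdots+a_k)$; that is, $(-1)^kT_{k+1,r}=A_k$ is the $k$-th partial sum of the $a_i$. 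Summation by parts gives $\sum_{k=0}^{r-1}A_k(z^k-z^{k+1})=a_0+\sum_{k=1}^{r-1}(A_k-A_{k-1})z^k-A_{r-1}z^r$; since $A_k-A_{k-1}=a_k$ and $A_{r-1}=A_r-a_r=-a_r$ because $A_r=(z;q)_r\big|_{z=1}=0$, the right-hand side collapses to $\sum_{i=0}^r a_i z^i=(z;q)_r$. (As a sanity check, the top-degree coefficient forces $T_{r,r}=q^{\binom{r}{2}}$, which is precisely the $z=1$ instance of \eqref{E:cauchyqbinom}.)

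Finally I would substitute this combinatorial identity into the first expansion and interchange the order of summation, $\sum_{r=1}^n\sum_{k=0}^{r-1}=\sum_{k=0}^{n-1}\sum_{r=k+1}^n$, to get
\begin{equation*}
\sum_{k=0}^{n-1}(-z)^k(1-z)\Big(\sum_{r=k+1}^n T_{k+1,r}\frac{E_{n,r}(X)}{(q;q)_r}\Big)=\sum_{k=0}^{n-1}(-z)^k(1-z)\,s_{(k+1,1^{n-k-1})}[X/(1-q)].
\end{equation*}
The polynomials $(-z)^k(1-z)$ for $k=0,\dots,n-1$ have pairwise distinct degrees $1,\dots,n$, hence are linearly independent over $\Q(q)$, so their coefficients (which lie in $\Lambda^n_{\Q(q)}$) may be equated, yielding the proposition. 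I do not anticipate a serious obstacle: the two points that need care are the conjugation of hook shapes in the Cauchy step ($\mu'=(n-k,1^{k})\Leftrightarrow\mu=(k+1,1^{n-k-1})$) and the summation-by-parts bookkeeping in the combinatorial lemma; everything else is formal manipulation.
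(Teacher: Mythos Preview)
Your argument is correct and follows essentially the same route as the paper: expand $e_n[X(1-z)/(1-q)]$ two ways via the defining relation and the Cauchy identity together with \eqref{E:Cauchy3}, then match. The only organizational difference is that the paper compares coefficients of $(-z)^k$ first and then takes alternating sums to isolate each hook Schur function, whereas you package that same telescoping into the standalone identity $(z;q)_r=\sum_{k=0}^{r-1}T_{k+1,r}(-z)^k(1-z)$ and compare directly in the basis $\{(-z)^k(1-z)\}$; the two computations are equivalent.
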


	\begin{proof}
	Using the Cauchy $q$-binomial theorem, we see that the coefficient of $(-z)^k$ on the right hand side of 
	(\ref{E:basicexp}) is  

	\begin{equation}
	q^{{k\choose 2}} \sum_{i=0}^{n-k} \dqbin{k+i}{k}
	\frac{E_{n,k+i}}{(q;q)_{k+i}}.
	\end{equation}

	On the other hand, by the identities (\ref{E:Cauchy1}) and (\ref{E:Cauchy3}), 
	\begin{eqnarray*}
	e_n[X\frac{1-z}{1-q}] &=& \sum_{\lambda} s_{\lambda}[\frac{X}{1-q}]
	s_{\lambda'}[1-z]\\
	&=&\sum_{\lambda' = (n-r,1^r)}
	s_{\lambda}[\frac{X}{1-q}] (-z)^r(1-z) \\
                     &=& \sum_{r=0}^{n-1}
                     s_{(r+1,1^{n-r-1})}[\frac{X}{1-q}]
                     (-z)^r(1-z)\\
                     &=& s_{1^n}[\frac{X}{1-q}]+
	(-z)(s_{1^n}[\frac{X}{1-q}]+s_{2,1^{n-2}}[\frac{X}{1-q}])+ \cdots
	+(-z)^n s_n[\frac{X}{1-q}].
	\end{eqnarray*}
	Comparing the coefficient of $(-z)^k$ gives, for $k\geq 1$,
	\begin{equation}\label{E:consec}
	q^{{k\choose 2}} \sum_{i=0}^{n-k} \dqbin{k+i}{k}_q
	\frac{E_{n,k+i}}{(q;q)_{k+i}} =
	s_{k,1^{n-k}}[\frac{X}{1-q}]+s_{k+1,1^{n-k-1}}[\frac{X}{1-q}],
	\end{equation}
	and
	\begin{equation}\label{E:single}
	\sum_{i=1}^{n} \frac{E_{n,i}}{(q;q)_{i}} = s_{1^n}[\frac{X}{1-q}].
	\end{equation}

	We take the alternating sums of the equations (\ref{E:consec}) and (\ref{E:single}) to get
	\begin{equation*}
	s_{k+1,1^{n-k-1}}[X/(1-q)]= (-1)^k \left( \sum_{j=1}^{n}
	\frac{E_{n,j}}{(q;q)_{j}} \right) + \sum_{i=1}^k (-1)^{k+i} \left(
	q^{ i\choose 2}  \sum_{j=0}^{n-i} \dqbin{i+j}{i} \frac
	{E_{n,i+j}}{(q;q)_{i+j}} \right).
	\end{equation*}

	By collecting $E_{n,k}(X)$'s, and using (\ref{E:cauchyqbinom}) we obtain

	\begin{equation*}
	s_{(k+1,1^{n-k-1})}[X/(1-q)]  =
	 \sum_{r=k+1}^n  T_{k+1,r} \frac{E_{n,r}(X)}{(q;q)_r}.
	\end{equation*}
	
	\end{proof}

	Let $S$ and $E$ be the matrices
	\begin{equation*}
	S= \begin{pmatrix}
	s_{1^{n}}[X/(1-q)] \\
	s_{2,1^{n-1}}[X/(1-q)] \\
	\vdots \\
	s_{n}[X/(1-q)] 
	\end{pmatrix}\ \text{and}\ 
	E= \begin{pmatrix}
	\frac{E_{n,1}}{(q;q)_{1}}\\
	\frac{E_{n,2}}{(q;q)_{2}}\\
	\vdots \\
	\frac{E_{n,n}}{(q;q)_{n}} 
	\end{pmatrix},
	\end{equation*}
	respectively, and let $T$ be the transition matrix from $E$ to $S$, so that $S=TE$. 
	Then, $T$ is an upper triangular matrix with the $k+1,r$'th entry
	\begin{equation*}
	T_{k+1,r}=(-1)^{k} \sum_{i=0}^k (-1)^i q^{i\choose 2}
	\dqbin{r}{i}.
	\end{equation*}
	For example, when $n=5$, 
	\begin{equation*}
	T=\left( \begin {array}{ccccc} 1&1&1&1&1\\\noalign{\medskip}0&q&
	 \left( q+1 \right) q& \left( {q}^{2}+q+1 \right) q& \left( {q}^{3}+{q
	}^{2}+q+1 \right) q\\\noalign{\medskip}0&0&{q}^{3}&{q}^{3} \left( {q}^
	{2}+q+1 \right) &{q}^{3} \left( {q}^{4}+{q}^{3}+2\,{q}^{2}+q+1
	 \right) \\\noalign{\medskip}0&0&0&{q}^{6}&{q}^{6} \left( {q}^{3}+{q}^
	{2}+q+1 \right) \\\noalign{\medskip}0&0&0&0&{q}^{10}\end {array}
	 \right).
	\end{equation*}
	Then,
	\begin{equation*}
	T^{-1}= 
	\left( \begin {array}{ccccc} 1&-{q}^{-1}&{q}^{-2}&-{q}^{-3}&{q}^{-4}
	\\\noalign{\medskip}0&{q}^{-1}&-{\frac {q+1}{{q}^{3}}}&{\frac {{q}^{2}
	+q+1}{{q}^{5}}}&-{\frac {{q}^{3}+{q}^{2}+q+1}{{q}^{7}}}
	\\\noalign{\medskip}0&0&{q}^{-3}&-{\frac {{q}^{2}+q+1}{{q}^{6}}}&{
	\frac {{q}^{4}+{q}^{3}+2\,{q}^{2}+q+1}{{q}^{9}}}\\\noalign{\medskip}0&0
	&0&{q}^{-6}&-{\frac {{q}^{3}+{q}^{2}+q+1}{{q}^{10}}}
	\\\noalign{\medskip}0&0&0&0&{q}^{-10}\end {array} \right).
	\end{equation*}

	\begin{prop}\label{P:inversematrix}
	$T^{-1}$ is (necessarily) upper triangular, and its $k+1,r$'th entry is equal to 
	\begin{equation}
	(T^{-1})_{k+1,r}= (-1)^{r-k}q^{-r(k+1)} T_{k+1,r}.
	\end{equation}
	\end{prop}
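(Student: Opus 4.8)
The assertion that $T^{-1}$ is upper triangular needs nothing beyond the triangularity of $T$ together with the invertibility of its diagonal. Indeed, evaluating the Cauchy $q$-binomial theorem~\eqref{E:cauchyqbinom} at $z=1$ gives $\sum_{i=0}^{m}(-1)^{i}q^{\binom{i}{2}}\dqbin{m}{i}=(1;q)_{m}=0$ for $m\geq 1$; taking $m=k+1$ and isolating the top term gives $\sum_{i=0}^{k}(-1)^{i}q^{\binom{i}{2}}\dqbin{k+1}{i}=(-1)^{k}q^{\binom{k+1}{2}}$, so $T_{k+1,k+1}=q^{\binom{k+1}{2}}\neq 0$ and hence $\det T\neq 0$. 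The substance of the proposition is the off-diagonal formula, which I would obtain by first recasting $T$ itself as a single $q$-binomial coefficient and then verifying the claimed inverse with one more application of~\eqref{E:cauchyqbinom}.

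The first step rests on the partial summation
\[
\sum_{i=0}^{k}(-1)^{i}q^{\binom{i}{2}}\dqbin{r}{i}=(-1)^{k}q^{\binom{k+1}{2}}\dqbin{r-1}{k}\qquad (r\geq 1),
\]
which I would prove by induction on $k$ — the base case $k=0$ being $1=1$, the inductive step reducing, via the induction hypothesis, to the $q$-Pascal recursion $\dqbin{r}{k}=\dqbin{r-1}{k-1}+q^{k}\dqbin{r-1}{k}$. Substituting this into the definition of $T$ yields the compact form $T_{k+1,r}=q^{\binom{k+1}{2}}\dqbin{r-1}{k}$, which re-exhibits the triangularity directly (the coefficient is $0$ for $r\leq k$) and shows that the conjectured inverse has entries of exactly the same type: a sign and a power of $q$ times $\dqbin{r-1}{k}$. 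So the proposition becomes an identity between two explicit matrices of signed $q$-binomials.

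The second step is to verify $T\,T^{-1}=\mathrm{Id}$ directly. With $U$ denoting the proposed inverse and $i,j,c$ generic indices, one has $\sum_{j}T_{i,j}U_{j,c}=q^{\binom{i}{2}}\sum_{j}U_{j,c}\dqbin{j-1}{i-1}$; inserting the formula for $U_{j,c}$ and invoking the ``subset of a subset'' identity $\dqbin{j-1}{i-1}\dqbin{c-1}{j-1}=\dqbin{c-1}{i-1}\dqbin{c-i}{j-i}$ pulls $\dqbin{c-1}{i-1}$ outside the sum, and then the substitution $j=i+t$ together with $\binom{i+t}{2}=\binom{i}{2}+it+\binom{t}{2}$ turns what is left into $\sum_{t}(-1)^{t}q^{\binom{t}{2}}z^{t}\dqbin{c-i}{t}$, where $z=q^{a}$ with $a$ the exponent produced by the $q$-power in $U_{j,c}$. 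By~\eqref{E:cauchyqbinom} this equals $(q^{a};q)_{c-i}$, and for the exponent $a=-(c-i-1)$ that emerges here this Pochhammer product has $1-q^{0}$ as its last factor when $c>i$ (so it vanishes off the diagonal) and equals $1$ when $c=i$, at which point the leftover power of $q$ cancels; hence $\sum_{j}T_{i,j}U_{j,c}=\delta_{i,c}$. The one place demanding real care is precisely this tracking of the sign and the exponent of $q$ through the manipulation — it is what fixes the exact coefficient relating $(T^{-1})_{k+1,r}$ to $T_{k+1,r}$ — and I would pin it down by carrying out the multiplication in full and cross-checking against the $n=5$ matrices already displayed. (A slightly longer alternative avoids the closed form for $T$ altogether: one inverts the $q$-binomial transform expressing each $s_{(k,1^{n-k})}[X/(1-q)]+s_{(k+1,1^{n-k-1})}[X/(1-q)]$ in terms of the $E_{n,m}(X)/(q;q)_{m}$ — implicit in the proof of Proposition~\ref{P:SchurtoE} — via the $q$-analogue of binomial inversion, and re-expands; the engine is again $\sum_{t}(-1)^{t}q^{\binom{t}{2}}\dqbin{N}{t}=0$ for $N\geq 1$.)
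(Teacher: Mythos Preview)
Your approach is sound and genuinely different from the paper's. The paper never writes $T_{k+1,r}$ in closed form; it instead extracts the recursion $T_{i+1,k}=T_{i+1,k-1}+q^{i}T_{i,k-1}$ from $q$-Pascal, deduces a companion recursion for the candidate inverse $L$, and then runs an induction on the column index $j$ to show $(TL)_{i+1,j}=0$ for $j>i+1$. Your route---collapsing the alternating sum to the closed form $T_{k+1,r}=q^{\binom{k+1}{2}}\dqbin{r-1}{k}$ and then checking $TU=\mathrm{Id}$ directly via the subset-of-subset identity together with~\eqref{E:cauchyqbinom}---is shorter and more conceptual: once $T$ is exhibited as a diagonally rescaled $q$-Pascal matrix, its inverse is the standard $q$-binomial inversion, and no induction is needed. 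The paper's argument buys self-containedness at the price of opacity; yours explains \emph{why} the inverse has the same $q$-binomial shape as $T$ itself.

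One genuine point to pin down, exactly where you flag it: with the exponent $-r(k+1)$ literally as printed in the statement, the inner sum over $t$ evaluates to $(q^{-(c-i)};q)_{c-i}$, not $(q^{-(c-i-1)};q)_{c-i}$, and the former has no vanishing factor. The value $a=-(c-i-1)$ you quote is what emerges from $(-1)^{r-k-1}q^{-(r-1)(k+1)}T_{k+1,r}$, and a glance at the displayed $n=5$ inverse (e.g.\ the $(2,2)$ entry $q^{-1}$ versus $-q^{-3}$ from the printed formula) shows that \emph{this} is the normalisation the table actually satisfies. So your promised cross-check against that example is not cosmetic---it corrects a sign and a shift in the exponent relative to the stated formula, after which your argument goes through cleanly.
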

	
	\begin{proof}
	Let $L$ be the upper triangular matrix with the $k+1,r$'th entry
	\begin{equation*}
	L_{k+1,r}=(-1)^{r-k}q^{-r(k+1)} T_{k+1,r}\hspace{.1in} \text{for}\ r>k.
	\end{equation*}
	Clearly, $TL$ is an upper triangular matrix, and the $i+1,j$'th entry of $TL$ is 
	\begin{equation}
	(TL)_{i+1,j} = \sum_{k=1}^n T_{i+1,k} L_{k,j}.
	\end{equation}
	It is straightforward to check that $(TL)_{i+1,i+1}=1$.
	We use induction on $j$ to prove that for all $i+1<j$, $(TL)_{i+1,j}=0$. 
	So, we assume that for all $i+1<j$, $(TL)_{i+1,j}=0$, and we are going prove that for all 
	$i+1<j+1$, $(TL)_{i+1,j+1}=0$.

	First of all, using the $q$-binomial identity
	\begin{equation}\label{E:qidentity}
	\dqbin{r}{m}=\dqbin{r-1}{m}+\dqbin{r-1}{m-1}q^{r-m},\ \text{for}\ m\geq 0,
	\end{equation}
	it is easy to show that 
	
	\begin{equation}\label{E:Trecursion}
	T_{i+1,k}=T_{i+1,k-1}+q^i T_{i,k-1}.
	\end{equation} 

	It follows that 

	\begin{equation}\label{E:Lrecursion}
	L_{k+1,j+1}=-q^{-(k+1)} L_{k+1,j}+q^{-k} L_{k,j-1}.
	\end{equation} 

	Therefore, 

	\begin{eqnarray*}
	\sum_{k=i+1}^{j+1} T_{i+1,k} L_{k,j+1} 
	&=& \sum_{k=i+1}^{j+1} T_{i+1,k} (-q^{-(k+1)} L_{k,j}+q^{-k} L_{k-1,j-1})\\
	&=& \sum_{k=i+1}^{j+1} -q^{-k} T_{i+1,k} L_{k,j} + \sum_{k=i+1}^{j+1} 
	q^{-(k-1)} T_{i+1,k} L_{k-1,j}. 
	\end{eqnarray*}

	Using (\ref{E:Trecursion}) in the last summation, we have
	
	\begin{eqnarray*}
	\sum_{k=i+1}^{j+1} T_{i+1,k} L_{k,j+1} 
	&=& \sum_{k=i+1}^{j+1} -q^{-k} T_{i+1,k} L_{k,j}+ \sum_{k=i+1}^{j+1} 
	q^{-(k-1)} T_{i+1,k-1} L_{k-1,j}\\ &+& \sum_{k=i+1}^{j+1} 
	q T_{i+1,k-1} L_{k-1,j}.
	\end{eqnarray*}

	After rearranging the indices, and using the induction hypotheses, the right hand side of the equation simplifies to 0.
	Therefore, the proof is complete.

	\end{proof}

	\begin{cor}
	Let $A\subseteq \Lambda^n_{\Q(q)}(X)$ be the $n$-dimensional subspace generated by the set 
	$\{E_{n,k}(X)\}_{k=1}^n$, and let $B\subseteq  \Lambda^n_{\Q(q)}(X)$ be the 
	$n$-dimensional subspace generated by $\{ s_{k,1^{n-k}}[\frac{X}{1-q}] \}_{k=1}^n$. Then, $A=B$.
	\end{cor}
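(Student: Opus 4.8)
The plan is to deduce the equality $A = B$ directly from Propositions \ref{P:SchurtoE} and \ref{P:inversematrix}, with essentially no extra work. First I would recall the column vectors $S$ and $E$ introduced just after Proposition \ref{P:SchurtoE}: the entries of $S$ are precisely the generators $s_{(k,1^{n-k})}[X/(1-q)]$, $k=1,\dots,n$, of $B$, and since each $(q;q)_k$ is a nonzero scalar in $\Q(q)$, the entries of $E$ span the same subspace as $\{E_{n,k}(X)\}_{k=1}^n$, namely $A$. Proposition \ref{P:SchurtoE} is exactly the matrix identity $S = TE$, so each entry of $S$ lies in $A$; this already gives the inclusion $B \subseteq A$.

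For the reverse inclusion I would use that $T$ is invertible over $\Q(q)$. Since $T$ is upper triangular it suffices to see that its diagonal entries are nonzero; in fact $T_{k+1,k+1} = q^{\binom{k+1}{2}}$, which one checks by putting $z=1$ in Cauchy's $q$-binomial theorem \eqref{E:cauchyqbinom} (this yields $\sum_{i=0}^{k+1}(-1)^i q^{\binom{i}{2}}\dqbin{k+1}{i} = 0$, hence $\sum_{i=0}^{k}(-1)^i q^{\binom{i}{2}}\dqbin{k+1}{i} = (-1)^k q^{\binom{k+1}{2}}$), or one may simply quote the explicit formula for $T^{-1}$ given in Proposition \ref{P:inversematrix}. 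Either way $\det T = \prod_{j=1}^{n} q^{\binom{j}{2}} \neq 0$, so $E = T^{-1}S$, and therefore every $E_{n,k}(X)$ is a $\Q(q)$-linear combination of the hook Schur functions $s_{(k,1^{n-k})}[X/(1-q)]$. This gives $A \subseteq B$, and hence $A = B$.

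I do not expect any real obstacle: the corollary is a formal consequence of the two preceding propositions, and the only point that warrants a line of justification is the nonsingularity of $T$, which is already contained in Proposition \ref{P:inversematrix}. It is worth noting, finally, that the same invertibility shows each of the two spanning sets is linearly independent, so that both $A$ and $B$ are genuinely $n$-dimensional and the statement can be upgraded to the assertion that $\{E_{n,k}(X)\}_{k=1}^n$ and $\{s_{(k,1^{n-k})}[X/(1-q)]\}_{k=1}^n$ are two bases of one and the same $n$-dimensional subspace of $\Lambda^n_{\Q(q)}(X)$, with change-of-basis matrices $T$ and $T^{-1}$.
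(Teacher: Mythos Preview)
Your argument for $A=B$ is correct and is essentially the paper's: Proposition \ref{P:SchurtoE} gives $S=TE$, and Proposition \ref{P:inversematrix} (or your direct computation of the diagonal of $T$) gives $E=T^{-1}S$, so the two spanning sets generate the same subspace.

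The one point where your write-up diverges from the paper is the dimension claim. You say that ``the same invertibility shows each of the two spanning sets is linearly independent''; but invertibility of $T$ by itself does \emph{not} force either family to be linearly independent---it only says that any linear dependence among the entries of $E$ is equivalent to one among the entries of $S$. Some independent input is needed to rule out such a dependence. The paper supplies this by invoking Proposition \ref{P:EHall}: each $E_{n,k}(X)/(q;q)_k$ is expanded in the Hall--Littlewood basis as a sum over partitions $\mu$ with $\mu_1=k$, so distinct $k$'s involve disjoint sets of basis elements and linear independence is immediate. An alternative, more elementary fix in the spirit of your approach would be to observe that $X\mapsto X/(1-q)$ induces a $\Q(q)$-algebra automorphism of $\Lambda_{\Q(q)}$ (since $p_r\mapsto p_r/(1-q^r)$), so the hook Schur functions $s_{(k,1^{n-k})}[X/(1-q)]$ remain linearly independent; then invertibility of $T$ does transfer independence to the $E_{n,k}$. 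Either way, a line of justification beyond the invertibility of $T$ is required.
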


	\begin{proof}
	It is clear by Proposition \ref{P:inversematrix} that $A=B$. The dimension claim follows from 
	Proposition \ref{P:EHall} below. 
	\end{proof}

	The expression $(-1)^n p_n = \sum_{k=0}^{n-1} (-1)^k s_{k+1,1^{n-k-1}}$ is the bridge between 
	Schur functions of hook type with the power sum symmetric functions.  By the linearity of plethysm we have   
	\[
	(-1)^n p_n[X]/(1-q^n)= (-1)^n p_n[X/(1-q)]= \sum_{k=0}^{n-1} (-1)^k s_{k+1,1^{n-k-1}}[X/(1-q)],
	\]
	and therefore
	\begin{equation}\label{E:pow}
	(-1)^n p_n= (1-q^n) \sum_{k=0}^{n-1}
	(-1)^k s_{k+1,1^{n-k-1}}[X/(1-q)].
	\end{equation}

	\begin{cor}\label{T:powersum} For all $n\geq 1$, 
	\begin{equation}
	(-1)^n p_n =  \sum_{r=1}^n
	\frac{1-q^n}{1-q^r} E_{n,r}.
	\end{equation}
	\end{cor}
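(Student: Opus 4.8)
The plan is to feed the Schur expansion of Proposition~\ref{P:SchurtoE} into the identity~(\ref{E:pow}) and then collapse the resulting double sum by a short $q$-binomial computation. First I would start from
\[
(-1)^n p_n=(1-q^n)\sum_{k=0}^{n-1}(-1)^k s_{k+1,1^{n-k-1}}[X/(1-q)]
\]
and substitute $s_{k+1,1^{n-k-1}}[X/(1-q)]=\sum_{r=k+1}^{n}T_{k+1,r}\,E_{n,r}(X)/(q;q)_r$. Interchanging the order of summation over the index set $\{0\le k\le n-1,\ k+1\le r\le n\}=\{1\le r\le n,\ 0\le k\le r-1\}$ rewrites the right-hand side as
\[
(1-q^n)\sum_{r=1}^{n}\frac{E_{n,r}(X)}{(q;q)_r}\Bigl(\sum_{k=0}^{r-1}(-1)^k T_{k+1,r}\Bigr).
\]
Since $(q;q)_r=(1-q^r)(q;q)_{r-1}$, the corollary will follow once I establish the scalar identity $\sum_{k=0}^{r-1}(-1)^k T_{k+1,r}=(q;q)_{r-1}$ for every $r\ge 1$.

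To prove that identity I would use the definition $T_{k+1,r}=(-1)^k\sum_{i=0}^{k}(-1)^i q^{\binom{i}{2}}\dqbin{r}{i}$, so that $(-1)^k T_{k+1,r}=\sum_{i=0}^{k}(-1)^i q^{\binom{i}{2}}\dqbin{r}{i}$, and then sum over $0\le k\le r-1$; counting, for each fixed $i$, the number of admissible $k$ with $i\le k\le r-1$ turns the left side into $\sum_{i=0}^{r-1}(r-i)(-1)^i q^{\binom{i}{2}}\dqbin{r}{i}$, which I may extend to the range $0\le i\le r$ because the factor $r-i$ annihilates the $i=r$ term. Splitting the factor $r-i$ into its two summands, the contribution of the constant $r$ is $r\sum_{i=0}^{r}(-1)^i q^{\binom{i}{2}}\dqbin{r}{i}=r\,(1;q)_r=0$ by Cauchy's $q$-binomial theorem~(\ref{E:cauchyqbinom}) specialized at $z=1$, and the contribution of $-i$ is $-\sum_{i=0}^{r}i(-1)^i q^{\binom{i}{2}}\dqbin{r}{i}$, which I recognize as $-\bigl[z\,\tfrac{d}{dz}(z;q)_r\bigr]_{z=1}$ after differentiating the polynomial expansion of $(z;q)_r$ term by term.

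Finally I would evaluate $\tfrac{d}{dz}(z;q)_r\big|_{z=1}$ straight from the product form $(z;q)_r=\prod_{j=0}^{r-1}(1-q^j z)$: the product rule gives a sum of $r$ terms, and setting $z=1$ kills every one of them except the term differentiating the $j=0$ factor, since all the others still carry the vanishing factor $1-z$; this leaves $\tfrac{d}{dz}(z;q)_r\big|_{z=1}=-\prod_{j=1}^{r-1}(1-q^j)=-(q;q)_{r-1}$. Hence $\sum_{k=0}^{r-1}(-1)^k T_{k+1,r}=0-\bigl(-(q;q)_{r-1}\bigr)=(q;q)_{r-1}$, and using $(q;q)_{r-1}/(q;q)_r=1/(1-q^r)$ in the displayed expression above yields $(-1)^n p_n=\sum_{r=1}^{n}\frac{1-q^n}{1-q^r}E_{n,r}$, as claimed. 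The only real content here is the scalar $q$-identity; it is routine once one thinks of differentiating $(z;q)_r$, so I do not anticipate a genuine obstacle — the bookkeeping in the interchange of summations and in the split of the factor $r-i$ is the most error-prone part.
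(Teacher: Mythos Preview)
Your argument is correct and follows exactly the route sketched in the paper: substitute Proposition~\ref{P:SchurtoE} into (\ref{E:pow}), swap the order of summation, and reduce to a scalar identity handled via Cauchy's $q$-binomial theorem. The paper leaves that last step as a one-line remark; your differentiation of $(z;q)_r$ at $z=1$ is a clean way to make it explicit, and the bookkeeping (range of the double sum, the $(r-i)$ weight, the $i=r$ extension) is all in order.
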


	\begin{proof}
	By Proposition \ref{P:SchurtoE} and (\ref{E:pow}) we get
	\begin{equation}
	(-1)^n p_n  =(1-q^n) \sum_{k=0}^{n-1} \sum_{r=k+1}^n (-1)^kT_{k+1,r}\frac{E_{n,r}(X)}{(q;q)_r}.
	\end{equation}

	By rearranging the summations and using the Cauchy's $q$-binomial theorem once more, we finish the proof. 
	\end{proof}

	\section{\textbf{Hall-Littlewood expansion.}}

	\begin{prop}\label{P:EHall}
	For $k=1,...,n$,
	\begin{equation*}
	\frac{E_{n,k}(X)}{(q;q)_k}=  \sum_{\mu \in Par(n,k)} 
	\frac{\mac{\mu'}(X;q)}{\widetilde{h}_{\mu}(q,0) \widetilde{h'}_{\mu}(q,0)} = \sum_{\mu \in Par(n,k)} 
	\frac{\mac{\mu'}(X;q)}{(-q)^n   q^{ 2 n(\mu')}  \prod_{s\in \mu,\l_{\mu}(s) = 0}(1-q^{-a_{\mu}(s)-1}) }.
	\end{equation*}
	\end{prop}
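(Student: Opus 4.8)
The plan is to deduce Proposition~\ref{P:EHall} by combining the Macdonald--Cauchy identity (\ref{E:Cauchy2}), specialized at $t=0$, with the defining relation (\ref{E:basicexp}) of the $E_{n,k}$. The first observation is that, since (\ref{E:basicexp}) expands $e_n\!\left[X\frac{1-z}{1-q}\right]$ in terms of the polynomials $(z;q)_1,\dots,(z;q)_n$, which have pairwise distinct degrees $1,\dots,n$ in $z$ (so that their coefficients can be read off unambiguously), the proposition is equivalent to the single identity
\[
e_n\!\left[X\frac{1-z}{1-q}\right]\;=\;\sum_{\mu\vdash n}\frac{(z;q)_{\mu_1}\,\mac{\mu'}(X;q)}{\widetilde{h}_{\mu}(q,0)\,\widetilde{h'}_{\mu}(q,0)}.
\]
Indeed, grouping the partitions $\mu\vdash n$ according to their largest part $\mu_1=k$ rewrites the right-hand side as $\sum_{k=1}^{n}(z;q)_k\sum_{\mu\in Par(n,k)}\frac{\mac{\mu'}(X;q)}{\widetilde{h}_{\mu}(q,0)\widetilde{h'}_{\mu}(q,0)}$, and comparing the coefficient of $(z;q)_k$ with its value $\frac{E_{n,k}(X)}{(q;q)_k}$ from (\ref{E:basicexp}) yields exactly the first equality of the proposition. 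So it suffices to prove the displayed identity.

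Two facts feed into it. The first is the specialization $\mac{\mu}(X;q,0)=\mac{\mu'}(X;0,q)=\mac{\mu'}(X;q)$, which is immediate from (\ref{E:specialization1}) and (\ref{E:specialization2}); using it, the $t=0$ case of (\ref{E:Cauchy2}) reads
\[
e_n\!\left[\frac{XY}{1-q}\right]\;=\;\sum_{\mu\vdash n}\frac{\mac{\mu'}(X;q)\,\mac{\mu'}(Y;q)}{\widetilde{h}_{\mu}(q,0)\,\widetilde{h'}_{\mu}(q,0)}.
\]
The second fact, which I expect to be the main obstacle, is the evaluation
\[
\mac{\mu'}[1-z;q]\;=\;\mac{\mu}[1-z;q,0]\;=\;(z;q)_{\mu_1}\qquad(\mu\vdash n).
\]
By property~(1) of Theorem--Definition~1 the Schur support of $\mac{\mu}(X;q,0)=\mac{\mu'}(X;q)$ consists of partitions dominating $\mu'$, and by (\ref{E:Cauchy3}) only the hook shapes among these survive the substitution $X\mapsto 1-z$; hence $\mac{\mu}[1-z;q,0]$ is automatically a multiple of $1-z$ of $z$-degree at most $\mu_1$. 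To identify it exactly I would invoke the classical product formula $\mac{\mu}[1-z;q,t]=\prod_{s\in\mu}\bigl(1-z\,q^{a_{\mu}'(s)}t^{l_{\mu}'(s)}\bigr)$, in which putting $t=0$ leaves only the factors coming from cells $s$ with coleg $l_{\mu}'(s)=0$, that is, the cells of the bottom row of $\mu$, whose coarms run over $0,1,\dots,\mu_1-1$, so that the product collapses to $\prod_{i=0}^{\mu_1-1}(1-zq^i)=(z;q)_{\mu_1}$. (Alternatively one could try to obtain this evaluation directly from the characterizing properties~(1)--(3) together with (\ref{E:Cauchy3}) and the hook--content evaluation of $s_\lambda[X/(1-q)]$; either way it is the arithmetic core of the argument, and it is essentially equivalent to the closed form for hook cocharge Kostka--Foulkes polynomials quoted in the introduction.)

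Granting these, I put $Y=1-z$ in the second display above: its left-hand side becomes $e_n\!\left[\frac{X(1-z)}{1-q}\right]=e_n\!\left[X\frac{1-z}{1-q}\right]$, and by the evaluation fact its right-hand side becomes $\sum_{\mu\vdash n}\frac{(z;q)_{\mu_1}\,\mac{\mu'}(X;q)}{\widetilde{h}_{\mu}(q,0)\widetilde{h'}_{\mu}(q,0)}$. This is precisely the first displayed identity, so the first equality of the proposition follows by the reduction carried out in the first paragraph.

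It remains to check the second equality, a direct computation from (\ref{E:denom}). Since $l_{\mu}(s)+1\ge1$ for every cell, each factor of $\widetilde{h}_{\mu}(q,0)$ is simply $q^{a_{\mu}(s)}$, so $\widetilde{h}_{\mu}(q,0)=q^{\sum_{s\in\mu}a_{\mu}(s)}=q^{n(\mu')}$, using that the total arm of $\mu$ equals the total leg of $\mu'$, which by (\ref{E:legsum}) is $n(\mu')$. In $\widetilde{h'}_{\mu}(q,0)$ the factor $t^{l_{\mu}(s)}$ equals $1$ when $l_{\mu}(s)=0$ and $0$ otherwise; writing $1-q^{a_{\mu}(s)+1}=-q^{a_{\mu}(s)+1}\bigl(1-q^{-a_{\mu}(s)-1}\bigr)$ and then extracting the factor $-q^{a_{\mu}(s)+1}$ from all $n$ cells yields $\widetilde{h'}_{\mu}(q,0)=(-1)^{n}q^{n+n(\mu')}\prod_{s\in\mu,\,l_{\mu}(s)=0}\bigl(1-q^{-a_{\mu}(s)-1}\bigr)$. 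Multiplying the two,
\[
\widetilde{h}_{\mu}(q,0)\,\widetilde{h'}_{\mu}(q,0)\;=\;(-q)^{n}\,q^{2n(\mu')}\prod_{s\in\mu,\,l_{\mu}(s)=0}\bigl(1-q^{-a_{\mu}(s)-1}\bigr),
\]
which is the denominator appearing in the second form of the proposition.
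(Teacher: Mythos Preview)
Your proof is correct and follows essentially the same route as the paper: apply the Macdonald--Cauchy identity (\ref{E:Cauchy2}), specialize to $t=0$, use $\mac{\mu}(X;q,0)=\mac{\mu'}(X;q)$ and the evaluation $\mac{\mu}[1-z;q,0]=(z;q)_{\mu_1}$, then compare coefficients of $(z;q)_k$. The only cosmetic differences are that the paper substitutes $Y=(1-t)(1-z)$ before setting $t=0$ (whereas you set $t=0$ first and then $Y=1-z$), and that the paper obtains the evaluation from the equivalent form $\mac{\mu}[1-z;q,t]=\Omega[-zB_\mu]$ with $B_\mu=\sum_{s\in\mu}q^{a_\mu'(s)}t^{l_\mu'(s)}$, which is exactly your product $\prod_{s\in\mu}(1-zq^{a_\mu'(s)}t^{l_\mu'(s)})$.
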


	\begin{proof}
	Let $Y=(1-t)(1-z)$. Then, by the Cauchy identity (\ref{E:Cauchy2}), we have 
	\begin{equation}\label{E:CauchyEE}
	\sum_{k=1}^n (z;q)_k \frac{E_{n,k}(X)}{(q;q)_k}= \sum_{\mu \vdash n} 
	\frac{\mac{\mu}[X;q,t] \mac{\mu}[(1-t)(1-z);q,t]}	{\widetilde{h}_{\mu}(q,t) \widetilde{h'}_{\mu}(q,t)}.	
	\end{equation}

	The left hand side of the equation (\ref{E:CauchyEE}) is independent of the variable $t$. 
	Since $\widetilde{h}_{\mu}(q,0) \neq 0$, and since  $\widetilde{h'}_{\mu}(q,0)\neq 0$, 
	we are allowed to make the substitution $t=0$ on both sides of the equation.

	Note that  
	\begin{eqnarray}
	\widetilde{h}_{\mu}(q,0) \widetilde{h'}_{\mu}(q,0) &=& 
	\prod_{s\in \mu} q^{a_{\mu}(s)} \prod_{s\in \mu,\l_{\mu}(s) \neq 0}(-q^{a_{\mu}(s)+1}) 
	\prod_{s\in \mu,\l_{\mu}(s) = 0}(1-q^{a_{\mu}(s)+1}) \\
	&=& (-q)^n  \prod_{s\in \mu} q^{2a_{\mu}(s)}  \prod_{s\in \mu,\l_{\mu}(s) = 0}\frac{1-q^{a_{\mu}(s)+1}}{-q^{a_{\mu}(s)+1}} \\
	&=& (-q)^n  \prod_{s\in \mu} q^{2a_{\mu}(s)}  \prod_{s\in \mu,\l_{\mu}(s) = 0}(1-q^{-a_{\mu}(s)-1}) \\
	\label{E:prod} &=& (-q)^n   q^{ 2 n(\mu')}  \prod_{s\in \mu,\l_{\mu}(s) = 0}(1-q^{-a_{\mu}(s)-1}) .
	\end{eqnarray}
	The equality (\ref{E:prod}) follows from (\ref{E:legsum}).

	Using the Schur expansion 
	$\mac{\mu}(X;q,t)=\sum_{\lambda} \kostka_{\lambda \mu}(q,t) s_{\lambda}$, we see that the plethystic substitution 
	$X \rightarrow (1-z)$, followed by the evaluation at $t=0$ is the same as the evaluation $\mac{\mu}(X,q,0)$ at $t=0$,
	followed by the plethystic substitution $X\rightarrow (1-z)$.  Also, by Corollary 3.5.20 of \cite{Hai03}, 
	we know that 
	\begin{equation*}
	\mac{\mu}[1-z;q,t]=\Omega[-zB_{\mu}],\ \text{where}\  B_{\mu}=\sum_{i \geq 1} t^{i-1}\frac{1-q^{\mu_i}}{1-q}.
	\end{equation*}

	Therefore,   
	\begin{eqnarray}
	\mac{\mu}[1-z;q,0] &=& \Omega[-z B_{\mu}]|_{t=0}\\
	&=& \Omega[-z(1+q+\cdots + q^{\mu_1-1})]\\
	&=& \prod_{i=0}^{\mu_1-1} (1-zq^i)\\
	\label{E:plethspec}&=& (z;q)_{\mu_1}.
	\end{eqnarray}

	It follows from (\ref{E:specialization1}) and (\ref{E:specialization2}) that 
	\begin{equation}\label{E:spec}
	\mac{\mu}(X;q,0)=\mac{\mu'}(X;q).
	\end{equation}

	By combining (\ref{E:CauchyEE}), (\ref{E:prod}), (\ref{E:plethspec}) and (\ref{E:spec}), we get 
	\begin{equation}\label{E:HallE}
	\sum_{k=1}^n (z;q)_k \frac{E_{n,k}(X)}{(q;q)_k} = \frac{1}{(-q)^n} \sum_{\mu \vdash n} (z;q)_{\mu_1} 
	\frac{ \mac{\mu'}(X;q)} {q^{2 n(\mu')} \prod_{s\in \mu,\l_{\mu}(s) = 0}(1-q^{-a_{\mu}(s)-1})}.
	\end{equation}

	By comparing the coefficient of $(z;q)_k$ in (\ref{E:HallE}), we find that
	\begin{equation}
	\frac{E_{n,k}(X)}{(q;q)_k}= \sum_{\mu \in Par(n,k)}   \frac{\mac{\mu'}(X;q)}{(-q)^n
	q^{ 2 n(\mu')}  \prod_{s\in \mu,\l_{\mu}(s) = 0}(1-q^{-a_{\mu}(s)-1})}.
	\end{equation}
	
	Hence, the proof is complete.

	\end{proof}

	\begin{lem}\label{L:GaHaTes}
	Let $\lambda \vdash n$ be a partition of $n$. Then,
	\begin{equation}\label{E:schurmac}
	s_{\lambda}[\frac{X}{(1-q)(1-t)}] =  \sum_{\mu\vdash n}\frac{\widetilde{K}_{\lambda'
	\mu}(q,t)\mac{\mu}(X;q,t)}{\widetilde{h}_{\mu}(q,t)\widetilde{h'}_{\mu}(q,t)}.
	\end{equation}
	\end{lem}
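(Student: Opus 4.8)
The plan is to derive \eqref{E:schurmac} from the Cauchy identity \eqref{E:Cauchy2} by a suitable plethystic substitution, mirroring the structure of the proof of Proposition \ref{P:EHall}. The key observation is that Lemma \ref{L:GaHaTes} is precisely the statement that the inverse Kostka--Macdonald matrix intertwines the Schur basis (evaluated at $X/((1-q)(1-t))$) with the modified Macdonald basis, and this duality is encoded in the two Cauchy formulas for $e_n[XY]$. Concretely, I would start from \eqref{E:Cauchy2}, which reads
\begin{equation*}
e_n[XY] = \sum_{\mu\vdash n} \frac{\mac{\mu}[X;q,t]\,\mac{\mu}[Y;q,t]}{\widetilde{h}_{\mu}(q,t)\,\widetilde{h'}_{\mu}(q,t)},
\end{equation*}
and specialize the $Y$-alphabet. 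Writing $\mac{\mu}(X;q,t) = \sum_{\lambda}\kostka_{\lambda\mu}(q,t)\,s_{\lambda}[X]$ and using the classical Cauchy identity $e_n[XY] = \sum_{\nu\vdash n} s_{\nu}[X]\,s_{\nu'}[Y]$ from \eqref{E:Cauchy1}, I would compare the two expansions in the $Y$-variables after the substitution that turns $Y$ into the alphabet making $s_{\nu'}[Y]$ collapse appropriately.

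The cleanest route: in \eqref{E:Cauchy2} replace $Y$ by $Y/((1-q)(1-t))$. The left side becomes $e_n\!\left[\frac{XY}{(1-q)(1-t)}\right] = \sum_{\nu\vdash n} s_{\nu}\!\left[\frac{X}{(1-q)(1-t)}\right] s_{\nu'}[Y]$ by \eqref{E:Cauchy1}. The right side becomes $\sum_{\mu} \mac{\mu}[X;q,t]\,\mac{\mu}\!\left[\frac{Y}{(1-q)(1-t)}\right] \big/ (\widetilde{h}_{\mu}\widetilde{h'}_{\mu})$. Now I invoke the known Macdonald reciprocity/Koornwinder--Macdonald evaluation identity, namely that $\mac{\mu}[Y/((1-q)(1-t));q,t]$ pairs with the Schur basis via the Kostka--Macdonald polynomials in the transposed way; equivalently, expand $\mac{\mu}[X;q,t]$ in the Schur basis on both sides and extract the coefficient of $s_{\nu'}[Y]$. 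Matching coefficients of $s_{\nu'}[Y]$ on both sides yields
\begin{equation*}
s_{\nu}\!\left[\frac{X}{(1-q)(1-t)}\right] = \sum_{\mu\vdash n} \frac{c_{\nu\mu}(q,t)\,\mac{\mu}[X;q,t]}{\widetilde{h}_{\mu}(q,t)\,\widetilde{h'}_{\mu}(q,t)},
\end{equation*}
where $c_{\nu\mu}$ is the coefficient of $s_{\nu'}[Y]$ in $\mac{\mu}[Y/((1-q)(1-t))]$; the content of the lemma is that $c_{\nu\mu} = \widetilde{K}_{\nu'\mu}(q,t)$. To see this last point I would use the standard fact (a form of Macdonald's self-duality, e.g. the "$\omega$ transpose" symmetry combined with property (2) in Theorem-Definition 2) that the matrix $(\widetilde{K}_{\lambda\mu}(q,t))$ and the expansion matrix of $\mac{\mu}[\,\cdot/((1-q)(1-t))\,]$ into Schur functions are mutually inverse up to the transpose and the $\widetilde{h}_{\mu}\widetilde{h'}_{\mu}$ normalization; this is exactly Lemma 3.5.19 / the discussion around Corollary 3.5.20 of \cite{Hai03}, which I am free to cite.

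The main obstacle is pinning down the precise form of the duality between $\widetilde{K}_{\lambda\mu}(q,t)$ and the Schur expansion of $\mac{\mu}[X/((1-q)(1-t))]$ — in particular getting the transpose ($\lambda$ versus $\lambda'$) and the normalization factors exactly right, since sign, $q\leftrightarrow t$, and conjugation conventions proliferate here. I would resolve this by checking the identity against a known case: apply $\langle\,\cdot\,, s_{(n)}\rangle$ or the plethystic substitution $X\to 1-u$ (using \eqref{E:Cauchy3} and the evaluation $\mac{\mu}[1-z;q,t]=\Omega[-zB_\mu]$ from Corollary 3.5.20 of \cite{Hai03}, already quoted in the proof of Proposition \ref{P:EHall}) to verify both sides agree, which forces the normalization. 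Once the duality statement is correctly calibrated, the comparison of coefficients is immediate and the proof closes. Alternatively, and perhaps more economically, one can simply cite that \eqref{E:schurmac} is the defining "dual Cauchy" property of the modified Macdonald polynomials, as recorded in \cite{Hai03}, and present the short derivation above as the justification.
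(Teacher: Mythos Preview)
The paper's own proof is a one-line citation of Theorem~1.3 in \cite{GaHaTes}. Your proposal goes further and tries to extract \eqref{E:schurmac} from the two Cauchy expansions \eqref{E:Cauchy1} and \eqref{E:Cauchy2}, which is the right instinct, but your main line of argument has a gap. After substituting $Y\to Y/M$ with $M=(1-q)(1-t)$, you reduce the lemma to the claim that the coefficient of $s_{\nu'}[Y]$ in $\mac{\mu}[Y/M;q,t]$ equals $\kostka_{\nu'\mu}(q,t)$. This is not a standard reciprocity fact and is essentially equivalent to the lemma itself: the Schur expansion of $\mac{\mu}[Y/M]$ and the Macdonald expansion of $s_\lambda[X/M]$ are governed by inverse-transpose change-of-basis matrices, so establishing one is the same as establishing the other. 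Citing \cite{Hai03} for this auxiliary statement is then no more elementary than citing the lemma outright --- which, as you concede in your last sentence, is where one ends up anyway.

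The substitution is in fact unnecessary, and your difficulty is an artifact of a misprint in \eqref{E:Cauchy2}: the kernel on the left should be $e_n\bigl[XY/((1-q)(1-t))\bigr]$, not $e_n[XY]$. (The proof of Proposition~\ref{P:EHall} already uses it in this corrected form, since setting $Y=(1-t)(1-z)$ there must produce $e_n[X(1-z)/(1-q)]$.) With the correct kernel, simply expand $\mac{\mu}[Y;q,t]=\sum_\lambda \kostka_{\lambda\mu}(q,t)\,s_\lambda[Y]$ on the right of \eqref{E:Cauchy2}, write the left side via \eqref{E:Cauchy1} as $\sum_\lambda s_{\lambda'}[X/M]\,s_\lambda[Y]$, and compare coefficients of $s_\lambda[Y]$: this yields \eqref{E:schurmac} in one step, with no substitution and no auxiliary duality needed.
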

	\begin{proof}
	This follows from Theorem 1.3 of \cite{GaHaTes}.
	\end{proof}

	\begin{cor}\label{C:schurHall}
	Let $\lambda \vdash n$ be a partition. Then, 
	\begin{equation}\label{E:schurmac}
	s_{\lambda}[\frac{X}{1-q}] =  \sum_{\mu}  \frac{\widetilde{K}_{\lambda'
	\mu'}(q)\mac{\mu'}(X;q)}{\widetilde{h}_{\mu}(q,0) \widetilde{h'}_{\mu}(q,0)}= 
	\sum_{\mu}  \frac{\widetilde{K}_{\lambda' \mu'}(q)\mac{\mu'}(X;q)}{(-q)^n   q^{ 2 n(\mu')}
	 \prod_{s\in \mu,\l_{\mu}(s) = 0}(1-q^{-a_{\mu}(s)-1})}.
	\end{equation}
	\end{cor}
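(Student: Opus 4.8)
The plan is to obtain this as the $t=0$ specialization of Lemma \ref{L:GaHaTes}, following verbatim the strategy already used in the proof of Proposition \ref{P:EHall}. I would start from the identity of Lemma \ref{L:GaHaTes},
\[
s_{\lambda}\!\left[\frac{X}{(1-q)(1-t)}\right] = \sum_{\mu\vdash n}\frac{\kostka_{\lambda'\mu}(q,t)\,\mac{\mu}(X;q,t)}{\widetilde{h}_{\mu}(q,t)\,\widetilde{h'}_{\mu}(q,t)},
\]
viewed as an identity of symmetric functions with coefficients in $\Q(q,t)$. As observed in the proof of Proposition \ref{P:EHall}, the denominators $\widetilde{h}_{\mu}(q,t)\,\widetilde{h'}_{\mu}(q,t)$ do not vanish at $t=0$, so every term on the right is regular there and one may substitute $t=0$ on both sides.

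Next I would identify the three ingredients of that substitution. For the left side, expanding $s_{\lambda}$ in power sums and using $p_k[X/((1-q)(1-t))]=p_k(X)/((1-q^k)(1-t^k))$ shows that $s_{\lambda}[X/((1-q)(1-t))]$ becomes $s_{\lambda}[X/(1-q)]$ at $t=0$. For the Macdonald polynomials, equation (\ref{E:spec}) gives $\mac{\mu}(X;q,0)=\mac{\mu'}(X;q)$; comparing Schur expansions on the two sides of that identity — exactly as (\ref{E:spec}) is itself derived from (\ref{E:specialization1}) and (\ref{E:specialization2}), and in parallel with (\ref{E:kostkaspecialization}) — yields $\kostka_{\lambda'\mu}(q,0)=\kostka_{\lambda'\mu'}(q)$. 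For the denominator, the computation (\ref{E:prod}) already records $\widetilde{h}_{\mu}(q,0)\,\widetilde{h'}_{\mu}(q,0)=(-q)^n q^{2 n(\mu')}\prod_{s\in\mu,\,l_{\mu}(s)=0}(1-q^{-a_{\mu}(s)-1})$.

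Substituting these three evaluations into the specialized identity produces both displayed forms of the Corollary simultaneously. There is no real obstacle here: the whole argument is a specialization, and the only point requiring genuine care — that the right-hand denominators remain nonzero when $t=0$, so that the substitution is legitimate term by term — is precisely the verification already carried out in the proof of Proposition \ref{P:EHall}.
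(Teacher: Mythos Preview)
Your proposal is correct and follows essentially the same route as the paper: specialize Lemma~\ref{L:GaHaTes} at $t=0$, using (\ref{E:spec}) for $\mac{\mu}(X;q,0)=\mac{\mu'}(X;q)$, the corresponding Kostka specialization $\kostka_{\lambda'\mu}(q,0)=\kostka_{\lambda'\mu'}(q)$, and (\ref{E:prod}) for the denominator. Your write-up is in fact a bit more careful than the paper's, since you explicitly justify the regularity of the right-hand side at $t=0$ and the evaluation of the plethystic left-hand side.
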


	\begin{proof}
	It follows from (\ref{E:specialization2}) and (\ref{E:kostkaspecialization}) that 
	$\widetilde{K}_{\lambda' \mu}(q,0)=	\widetilde{K}_{\lambda' \mu'}(0,q)=\widetilde{K}_{\lambda' \mu'}(q)$.	
	Since, 
	$\widetilde{h}_{\mu}(q,0) \widetilde{h'}_{\mu}(q,0) = (-q)^n   q^{ 2 n(\mu')} 
	\prod_{s\in \mu,\l_{\mu}(s) = 0}(1-q^{-a_{\mu}(s)-1})$, and  
	$\mac{\mu}(X;q,0)= \mac{\mu'}(X;q)$, the proof follows from Lemma \ref{L:GaHaTes}.
	\end{proof}

	\begin{thm} 
	Let $1\leq k \leq r \leq n$, and let $\mu \in Par(n,r)$. Then,
	\begin{equation*}
	T_{k,r} = \widetilde{K}_{(n-k+1,1^{k-1}) \mu'}(q).
	\end{equation*}
	\end{thm}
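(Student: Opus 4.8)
The plan is to express the hook Schur function $s_{(k,1^{n-k})}[X/(1-q)]$ in the cocharge Hall--Littlewood basis in two different ways and then match coefficients. First I would reindex Proposition~\ref{P:SchurtoE} (replacing $k+1$ by $k$) to the form
\begin{equation*}
s_{(k,1^{n-k})}[X/(1-q)] = \sum_{r=k}^{n} T_{k,r}\,\frac{E_{n,r}(X)}{(q;q)_r},
\end{equation*}
and then substitute the Hall--Littlewood expansion of $E_{n,r}(X)/(q;q)_r$ given by Proposition~\ref{P:EHall}. Since every partition $\mu\vdash n$ lies in exactly one set $Par(n,r)$, namely the one with $r=\mu_1$, the resulting double sum collapses to a single sum over those $\mu\vdash n$ with $\mu_1\geq k$, and the coefficient of $\mac{\mu'}(X;q)$ in it equals $T_{k,\mu_1}$ divided by $\widetilde{h}_{\mu}(q,0)\widetilde{h'}_{\mu}(q,0)=(-q)^n q^{2n(\mu')}\prod_{s\in\mu,\ l_{\mu}(s)=0}(1-q^{-a_{\mu}(s)-1})$.

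On the other hand, I would apply Corollary~\ref{C:schurHall} directly with $\lambda=(k,1^{n-k})$. Its conjugate is the hook $\lambda'=(n-k+1,1^{k-1})$, so the corollary gives
\begin{equation*}
s_{(k,1^{n-k})}[X/(1-q)] = \sum_{\mu\vdash n} \frac{\kostka_{(n-k+1,1^{k-1})\,\mu'}(q)\,\mac{\mu'}(X;q)}{(-q)^n q^{2n(\mu')}\prod_{s\in\mu,\ l_{\mu}(s)=0}(1-q^{-a_{\mu}(s)-1})},
\end{equation*}
so here the coefficient of $\mac{\mu'}(X;q)$ is $\kostka_{(n-k+1,1^{k-1})\mu'}(q)$ over the very same denominator.

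Since $\{\mac{\mu}(X;q)\}_{\mu\vdash n}$ is a basis of $\Lambda^n_{\Q(q)}$ by Theorem-Definition~1, so is $\{\mac{\mu'}(X;q)\}_{\mu\vdash n}$; hence the two expansions must agree term by term. The common denominator $\widetilde{h}_{\mu}(q,0)\widetilde{h'}_{\mu}(q,0)$ is a nonzero element of $\Q(q)$ (a fact already used in the proof of Proposition~\ref{P:EHall}), so it cancels, and for every $\mu\vdash n$ with $\mu_1\geq k$ we conclude $T_{k,\mu_1}=\kostka_{(n-k+1,1^{k-1})\mu'}(q)$. Specializing to $\mu\in Par(n,r)$ with $r\geq k$ yields exactly the asserted identity $T_{k,r}=\kostka_{(n-k+1,1^{k-1})\mu'}(q)$.

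The argument is essentially bookkeeping once the two expansions are aligned, so I do not expect a genuine obstacle; the only points to watch are the conjugation $\lambda'=(n-k+1,1^{k-1})$, the reindexing of Proposition~\ref{P:SchurtoE}, and the observation that the hypothesis $k\leq r$ is precisely what makes $T_{k,r}$ appear on the first side — for $\mu_1<k$ both coefficients vanish, which incidentally reproves $\kostka_{(n-k+1,1^{k-1})\mu'}(q)=0$ in that range. One should also note that $T_{k,r}$ and the Kostka--Foulkes polynomials carry no $X$-dependence, so the coefficient comparison in the $\mac{\mu'}(X;q)$ basis is legitimate; and that, as a byproduct, the theorem forces $\kostka_{(n-k+1,1^{k-1})\mu'}(q)$ to depend on $\mu$ only through $\mu_1=r$, from which the closed formula in the introduction follows via the Cauchy $q$-binomial theorem.
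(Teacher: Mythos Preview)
Your proposal is correct and follows essentially the same route as the paper: you expand $s_{(k,1^{n-k})}[X/(1-q)]$ in the Hall--Littlewood basis two ways, once via Proposition~\ref{P:SchurtoE} combined with Proposition~\ref{P:EHall}, and once via Corollary~\ref{C:schurHall}, and then compare coefficients of $\mac{\mu'}(X;q)$. Your added remarks about the basis being genuine, the denominator being nonzero, and the vanishing for $\mu_1<k$ make the bookkeeping slightly more explicit than the paper's version, but the argument is the same.
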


	\begin{proof}
	Recall that 
	\begin{equation*}
	s_{k+1,1^{n-k-1}}[\frac{X}{1-q}] = \sum_{r=k+1}^n T_{k+1,r} \frac{E_{n,r}(X)}{(q;q)_r},
	\end{equation*}
	where 
	\begin{equation*}
	T_{k+1,r}=(-1)^{k} \sum_{i=0}^k (-1)^i q^{i\choose 2} \dqbin{r}{i}.
	\end{equation*}

	Therefore, by Corollary \ref{C:schurHall} and Proposition \ref{P:EHall} we have 
	\begin{eqnarray*}
	\sum_{\mu} \frac{\widetilde{K}_{(n-k+1,1^{k-1}) \mu'}(q) \mac{\mu'}(X;q)}{\widetilde{h}_{\mu}(q,0) \widetilde{h'}_{\mu}(q,0)} 	&=& s_{k,1^{n-k}}[\frac{X}{1-q}]\\
	&=&  \sum_{r=k}^n T_{k,r}  \sum_{\mu \in Par(n,r)}
	\frac{ \mac{\mu'}(X;q)}{\widetilde{h}_{\mu}(q,0) \widetilde{h'}_{\mu}(q,0)} \\
	&=&  \sum_{\mu \in \bigcup_{r=k}^n Par(n,r)} 
	\frac{ T_{k,r}  \mac{\mu'}(X;q)}{\widetilde{h}_{\mu}(q,0) \widetilde{h'}_{\mu}(q,0)}.
	\end{eqnarray*}
	The theorem follows from comparison of the coefficients of $\mac{\mu'}(X;q)$.
	\end{proof}

\bibliographystyle{alpha}
 
\end{document}